\numberwithin{equation}{section}
\theoremstyle{thmstyleone}%
\newtheorem{theorem}{Theorem}%
\newtheorem{proposition}[theorem]{Proposition}%
\newtheorem{cor}[theorem]{Corollary}
\theoremstyle{thmstyletwo}%
\newtheorem{remark}{Remark}%
\theoremstyle{thmstylethree}%
\newtheorem{definition}{Definition}%
\newtheorem{notat}{Notation} 
\numberwithin{theorem}{section}
\numberwithin{remark}{section}
\numberwithin{definition}{section}
\begin{document}

\title{Well-Posedness and regularity properties of 2d $\beta$-plane stochastic Navier-Stokes equations in a periodic channel}


\author*[1]{\fnm{Yuri} \sur{Cacchiò }}\email{yuri.cacchio@gssi.it}

\author*[2]{\fnm{Amirali} \sur{Hannani}}\email{amirali.hannani@kuleuven.be}

\author*[3]{\fnm{Gigliola} \sur{Staffilani}}\email{gigliola@math.mit.edu}

\affil[1]{\orgname{Gran Sasso Science Institute}, \orgaddress{\street{Viale Francesco Crispi, 7}, \city{L'Aquila}, \postcode{67100}, \state{Italy}}}

\affil[2]{\orgdiv{Instituut voor Theoretische Fysica}, \orgname{KU Leuven}, \orgaddress{\street{Celestijnenlaan 200d}, \city{Leuven}, \postcode{3001}, \state{Belgium}}}

\affil[3]{\orgdiv{Department of Mathematics}, \orgname{Massachusetts Institue of Technology}, \orgaddress{\street{77 Massachusetts Ave}, \city{Cambridge}, \postcode{02139-4307}, \state{MA}, \country{USA}}}


\abstract{ We consider the 2d $\beta$-plane stochastic Navier-Stokes equations in a periodic  channel. We prove the well-posedness and existence of the stationary measure, as well as certain regularity estimates concerning the support of the stationary
measure.  The mentioned estimates are crucial for the rigorous study of the cascade phenomena in this equation \cite{GigliolaAmiraliYuri}.
To the best of our knowledge, this is the first mathematically rigorous treatment of these equations involving both the stochastic noise and the Coriolis
force.}
\keywords{Stochastic Navier-Stokes, $\beta$-plane, Well-Posedness, Stationary
Solution}
\maketitle

\section{Introduction}\label{introduction}
One of the canonical models in fluid dynamics is the \textit{two-dimensional Navier-Stokes equations with additive stochastic noise (2d SNS)}. This equation has been studied extensively both in mathematics and physics communities. We refer the reader to \cite{Bensoussan,daprato,flandoli,Hairer,kuk,Temam,mustafa,Yuri,gallagher2,gallagher1,Shepherd} for a non-exhaustive list of works concerning well-posedness, ergodicity and long time behavior in the stochastic and deterministic case. On the other hand, 
 in the physics literature, we refer to \cite{Batchelor2,Batchelor,boffetta,Cost,Danilov,eyink,Fjrtoft,frisch,huang,Kraichnan,nozawa,rhines,salmon,scott,ScottPolv,sriniv,vallis4,vallis3}, as examples where these equations are used to model various phenomena such as jet streams and 2d turbulence.

Despite the wide range of applications, 2d SNS fails to reflect some features of \textit{geophysical flows}. In fact, if we deal with such flows, we must include the planetary rotation (via Coriolis force) into the stochastic Navier-Stokes equations \cite{gallagher1,holton,McWilliams,pedlosky,vallis1} resulting in the so-called 
$\beta$-plane model \cite{chekhlov,Cost2,Cost,Cope,galerpin2,galerpin1,huang2}.\\

Taking into account the Coriolis force, we consider the following system of equations, which will be the object of study in the present work
\begin{equation}\label{Problem}\left\{
    \begin{array}{rl}
    \partial_t u+\left(u\cdot \nabla\right)u+f u^\perp &=\nu\Delta u-\alpha u-\nabla p+\varphi;\\
    \nabla \cdot u&=0,
\end{array}
\right.
\end{equation}
where $u=(u^1,u^2)$ and $p$ are unknown velocity field and pressure, $(u\cdot\nabla)$ stands for the differential operator $u^1\partial_{x^1}+u^2\partial_{x^2}$ with $x=(x^1,x^2)$, $fu^\perp$ is the Coriolis force where $f$ is the Coriolis parameter (cf. \eqref{Coriolisapprox}) and $u^\perp=(-u^2,u^1)$
, $\varphi$ is the stochastic process, and damping is given by a combination of drag $\alpha>0$ and viscosity $\nu>0$.\\

 We refer the interested reader to \cite{salmon,vallis1} for complete derivations of this system. \\
In this manuscript, we use a well-known approximation of the Coriolis force, the so-called $\beta$-plane approximation \cite{Nazarenko,vallis1}. This regime captures the dynamic effects of rotation. This approximation is done mainly by a Taylor expansion of the Coriolis parameter around a fixed latitude $\varTheta_0$. Consequently, for small variations in latitude, we can write
\begin{equation}
    f=2\Omega\sin\varTheta\approx 2\Omega\sin{\varTheta_0}+2\Omega(\varTheta-\varTheta_0)\cos{\varTheta_0},
\end{equation}
where $\Omega$ is the angular velocity of the sphere. With further approximation, we can assume that the Coriolis parameter varies as
\begin{equation} \label{Coriolisapprox}
    f=f_0+\beta x^2
\end{equation}
for small variation of $x^2$, where $f_0=2\Omega\sin{\varTheta_0}$, $\beta=\partial f/\partial x^2=(2\Omega\cos{\varTheta_0})/\mathfrak{r}$ and $\mathfrak{r}$ is the radius of the planet. Typical values of $\varTheta_0$ are: $0^\circ$ for the so-called equatorial flows, $45^\circ$ for the so-called mid-latitude flows, and around $80^\circ$ for the so-called high latitude flows. 

Introducing the Coriolis force in the $\beta$-plane approximation breaks the symmetry along the $x^2$ direction making the system anisotropic. Hence, we cannot consider a standard double periodic domain (we also refer to \cite{gallagher2,gallagher1} for more discussion in this direction).
This leads us to take the most treatable, physically relevant domain: a periodic channel \cite[p 277]{salmon}.
 Then, we pose the system of equations \eqref{Problem} on a periodic domain in $x^1$, $\mathbb{T}_{x^1}=[0,L)$ torus of size $L$, and a bounded interval in $x^2$, $I=[a,b]$, equipped with periodic boundary conditions in $x^1$ and no-slip boundary conditions in $x^2$, i.e.
\begin{align}
    u(t,0,x^2)&=u(t,L,x^2),\label{periodicboundarycond}\\
    u(t,x^1,\partial I)&=0. \label{noslipboundary}
\end{align}
Finally, we assume that the stochastic process is spatially regular and white  in time as in \cite{kuk}, i.e. 
\begin{equation}\label{stochasticprocess}
    \varphi(t,x)=\frac{\partial}{\partial t}\zeta(t,x),\ \ \ \ \zeta(t,x)=\sum_{j=1}^{\infty} b_j\beta_j(t)e_j(x),\ \ \ \ t\geq0
\end{equation}
where $\{e_j\}$ is a divergence-free orthonormal basis in $H$  (completion of divergence-free smooth functions with proper boundary conditions in $L^2(\mathbb{T}\times I)$,  cf. Definition \ref{spaces definitions}), $b_j$ are  constants such that, 
\begin{align} 
    \varepsilon&:=\frac{1}{2}\sum_{j=1}^\infty b_j^2<\infty, \label{eq: intro: energy}\\
    \eta&:=\frac{1}{2}\sum_j \fint_{\mathbb{T}\cross I}b_j^2|\nabla \cross e_j|^2<\infty, \label{eq: intro: enstrophyavg}
\end{align}
and $\{\beta_j\}$ is a sequence of independent standard Brownian motions (cf. \eqref{def:stochasticforce}). 
The constants $\varepsilon$, $\eta$ are called average energy and enstrophy input per unit time per unit area, respectively. 

To the best of our knowledge, there is no rigorous mathematical study concerning these equations having both the stochastic noise and the Coriolis force.

The goal of this paper is to prove the well-posedness and existence of the stationary measure of \eqref{Problem}. 
By stationary, we mean that the law of $u(\cdot)$ coincides with the law of  $u(\cdot+ \tau)$ for any $\tau \geq 0$.  In addition, some regularity estimates regarding the support of the stationary measure have been proved. More explicitly, we observe that $ \nu \mathbb{E
}(\|\Delta u \|^2_{L^2})$ is bounded, where $\mathbb{E}$ denotes the expectation w.r.t the stationary measure, meaning that the stationary measure is at least supported on $H^2$ functions. Such regularity results pave the way for studying the \textit{cascade} phenomena in a rigorous way as it is motivated by \cite{GigliolaAmiraliYuri}.\\

To obtain mentioned results, we use techniques from
  \cite{Temam} to deal with the peculiarity of the domains, and we take advantage of the ideas presented in \cite{kuk} to deal with the noise. Still, combining ideas presented in \cite{kuk}, \cite{Temam} is not sufficient, and we should deal with the additional difficulties arising from the Coriolis force separately. Let us stress that the above-mentioned results are not straightforward. In fact, by adding the Coriolis force, we have an additional term to control along with the particular choice of the domain. In particular, methods in \cite[p.66]{kuk} and \cite[p.282]{Temam} are not directly applicable.

To be more precise,  similar to \cite{kuk}, the main idea is to split the solution into two sub-problems: the well-posedness of the stochastic Stokes equation and the well-posedness of Navier-Stokes equations with random coefficients. Since the Coriolis force does not affect the stochastic Stokes equation, we can directly apply results from \cite[Proposition 2.4.2]{kuk} with minor adjustments regarding the domain. On the other hand, the Coriolis force appears in the Navier-Stokes equations with random coefficients. Then, we prove the well-posedness of the resulting equation using ideas inspired by \cite[Theorem 3.1]{Temam}. Here the main tool is the Galerkin method. We emphasize that this method is now applied to an equation with random coefficients entering through the solution of the stochastic Stokes equation. Moreover, introducing the Coriolis force leads us to estimate a new term, which, fortunately, we will show that can also be controlled with similar techniques. As mentioned above, the planetary rotation also influences the choice of our domain which is an additional difference with respect to \cite{kuk}, forcing us to use ideas from \cite{Temam}.
Finally, because of the $\beta$-plane approximation, we have different boundary conditions compared to \cite{kuk,Temam}. 

We would like to conclude this part of the introduction by saying that while the readers of this note will learn some generalizations of results in   \cite{kuk,Temam} when the Coriolis force is at play, they will also have a clear summary of well-posedness results for stochastic Navier-Stokes equations in 2D. 

The organization of the paper is as follows. 
In Section \ref{sectionprelimin}, we define a solution of \eqref{Problem} in the sense of Definition \ref{solutiondef}. In Section \ref{Well-Posedness}, we show the well-posedness result (cf. Theorem \ref{TheoSolution}). We prove the existence of a stationary measure for these equations (cf. Theorem \ref{theoremstationarymeasure}) in the last section.  Furthermore, we investigate certain regularity properties of the support of this measure in Theorem \ref{regularity solution thm}.

\bigskip
\textbf{ Acknowledgment} Y.C. was funded in part by Sapienza "Giovani Ricercatori" Grant DR n.3147/2022 and PRIN-MUR grant 2022YXWSLR "Boundary analysis for dispersive and viscous fluids". A.H. was funded in part by the FWO grant G098919N, the ANR grant LSD-15-CE40-0020-01, and the NSF Grant DMS-1929284. G.S. was funded in part by the NSF grant DMS-2052651 and the Simons Foundation through the Simons Collaboration  Wave Turbulence Grant.



\section{Preliminaries and Main Results}\label{sectionprelimin}
The system of equations \eqref{Problem} is posed on a periodic domain in $x^1$, $\mathbb{T}_{x^1}=[0,L)$ torus of size $L>0$, and a bounded domain in $x^2$, $I=[a,b]$, which means $x=(x^1,x^2)\in \mathbb{T}\cross I$.

Let us introduce the following vector spaces and set some notations.
\begin{definition}\label{spaces definitions}
We define 
\begin{itemize}
    \item[1)] $D
    =\{u\in C^\infty(\mathbb{T}\cross (a,b)); u(0,x^2)=u(L,x^2) \text{ with compact support in }x^2\}$
    \item[2)] $D_\sigma
    =\{u\in D(\mathbb{T}\cross (a,b)), \nabla \cdot u=0\}$
    \item[3)] $H^m_0
    =\overline{D(\mathbb{T}\cross (a,b))}^{H^m(\mathbb{T}\cross (a,b))}$
    \item[4)] $H
    =\overline{D_\sigma(\mathbb{T}\cross (a,b))}^{L^2(\mathbb{T}\cross (a,b))}$
    \item[5)] $V
    =\overline{D_\sigma(\mathbb{T}\cross (a,b))}^{H^1_0(\mathbb{T}\cross (a,b))}$
\end{itemize}
where $\overline{D(\mathbb{T}\cross (a,b))}^{H^m(\mathbb{T}\cross (a,b))}$ denotes the closure of $D$ in the Sobolev space $H^m$. A similar notation holds for $H$ and $V$.
In addition, $H$ and $V$ are equipped with the induced norm,
\begin{equation*}
    \norm{\cdot}_{L^2(\mathbb{T}\cross I)}; \ \ \ \norm{\cdot}_{H^1(\mathbb{T}\cross I)},
\end{equation*}
respectively. $H'$ and $V'$ denote the dual space of $H$ and $V$.
\end{definition}

\begin{remark}
By the Riesz representation theorem, we have 
\begin{equation}
    V\subset H=H'\subset V'.
\end{equation}
\end{remark}

\begin{notat}
Throughout the paper we write $L^2$ instead of $L^2(\mathbb{T}\cross I)$ and for the norm we write $\norm{\cdot}_{L^2}$. 
Similarly, for other spaces defined above, from now on, we omit the dependence on the domain $\mathbb{T}\cross I$ whenever it does not cause any confusion.  
\end{notat}

Assuming boundary conditions as in \eqref{periodicboundarycond}, \eqref{noslipboundary} and fixing the initial data 
\begin{equation}
    u(0)=u_0\in H,
\end{equation}
we derive the following Cauchy problem 
\begin{equation}\label{Problem1}\left\{
    \begin{array}{rl}
     \partial_t u+\left(u\cdot \nabla\right)u+fu^\perp &=\nu\Delta u-\alpha u-\nabla p+\varphi;\\ 
     \nabla \cdot u&=0;\\
     u(t,0,x^2)&=u(t,L,x^2);\\
     u(t,x^1,\partial I)&=0;\\
     u(0)&=u_0\in H.
\end{array}
\right.
\end{equation}
We recall that 
\begin{equation} \label{def:stochasticforce}
    \varphi(t,x)=\frac{\partial}{\partial t}\zeta(t,x),\ \ \ \ \zeta(t,x)=\sum_{j=1}^{\infty} b_j\beta_j(t)e_j(x),\ \ \ \ t\geq0
\end{equation}
where $\{e_j\}$ is a divergence-free orthonormal basis in $H$, $b_j$ are  constants satisfying \eqref{eq: intro: energy}-\eqref{eq: intro: enstrophyavg}, and $\{\beta_j\}$ is a sequence of independent standard Brownian motions. 

We assume that the Brownian motions $\{\beta_j\}$ are defined on a complete probability space $(\Omega,\mathcal{F},\mathbb{P})$ with a filtration $\mathcal{G}_t$, $t\geq0$, and the $\sigma-$algebras $\mathcal{G}_t$ are completed with respect to $(\mathcal{F},\mathbb{P})$, that is, $\mathcal{G}_t$ contains all $\mathbb{P}-$null sets $A\in \mathcal{F}$.

Let us note that \eqref{Problem1} is not a closed system of evolution equations in the sense that it does not contain the time derivative of the unknown function $p$. However, we can write an equivalent system without pressure and obtain a nonlinear PDE which can be regarded as an evolution equation in a Hilbert space \cite{kuk,Temam}.

This is done by the following abstract formulation
\begin{equation}\label{Problemleray}\left\{
    \begin{array}{rl}
     \partial_t u+\nu Au+B(u)+\alpha u+Fu&=\varphi; \\
     \nabla \cdot u&=0;\\
     u(t,0,x^2)&=u(t,L,x^2);\\
     u(t,x^1,\partial I)&=0;\\
     u(0)&=u_0\in H 
\end{array}
\right.
\end{equation}
where 
\begin{align}
    A u&=-\Pi\Delta u, \\
    B(u)&=B(u,u)=\Pi((u\cdot\nabla)u),\\
    Fu&=\Pi(fu^\perp),
\end{align}
and
\begin{equation}
    \Pi:H^s(Q,\mathbb{R}^2)\to H^s_\sigma(Q,\mathbb{R}^2), 
\end{equation}
is the orthogonal projection for any bounded Lipschitz domain $Q$ called Leray projection. Here $H^s$ is the standard Sobolev space and $H^s_\sigma$ denotes the $H^s$-divergent free functions. 
The system \eqref{Problemleray}, sometimes called weak formulation of Navier-Stokes equations, was first derived by Leray in \cite{Leray1,Leray3,Leray2}.

Keeping in mind property \eqref{stochasticprocess} of the stochastic process $\varphi$, let us define the solution of system \eqref{Problemleray}.

\begin{definition}\label{solutiondef}
An $H-$valued random process $u(t)$, $t\geq0$, is called a solution for \eqref{Problemleray} if:
\begin{itemize}
    \item [1)] The process $u(t)$ is adapted to the filtration $\mathcal{G}_t$ (cf. \eqref{def:stochasticforce}), and its almost every trajectory belongs to the space
    \begin{equation*}
        \mathcal{X}=C\left(\mathbb{R}_+; H\right)\cap L^2_{loc}\left( \mathbb{R}_+;V\right). 
    \end{equation*}
    \item[2)] Identity \eqref{Problemleray} holds in the sense that, with probability one,
    \begin{equation}\label{solution}
        u(t)+\int_0^t 
        \underbrace{\left(\nu Au+B(u)+\alpha u+Fu\right)}_{f_u(s)}ds=u(0)+\zeta(t), \ \ \ \ t\geq0,
    \end{equation}
    where the equality holds in the space $H^{-1}$.
\end{itemize}
\end{definition}
\begin{remark}
Let us remark that the system of 
equations \eqref{Problem1} and its well-posedness are equivalent to the well-posedness of \eqref{Problemleray} in the sense of Definition \ref{solutiondef}. In fact, there is an equivalence theorem between solutions \cite[pp. 42]{kuk}. This theorem, given a solution of problem \eqref{Problemleray} in the sense of Definition \ref{solutiondef}, allows us to construct $(u,p)$ as solutions of problem \eqref{Problem1}.
\end{remark}

We give now an informal statement of the result here. However, the well-posedness is proved in Theorem \ref{TheoSolution}, the existence of a stationary measure in Theorem \ref{theoremstationarymeasure} and the regularity of its support in Theorem \ref{regularity solution thm}.

\begin{theorem}
For any $\nu,\alpha>0$ and any $\mathcal{G}_0$-measurable random variable $u_0(x) \in H$, problem \eqref{Problemleray} has a unique solution $u(t)$, $t\in [0,T]$ for every fixed $T\in \mathbb{R}_+$, satisfying the initial condition $u(0)=u_0$, almost surely. Moreover, the stochastic Navier-Stokes system \eqref{Problemleray} has at least a stationary measure supported on $H^2$.
\end{theorem}
Let us mention that we have an equivalent result in terms of vorticity 
\begin{equation}
    \omega:=\nabla \cross u=\partial_1 u^2-\partial_2 u^1.
\end{equation}
In terms of vorticity, \eqref{Problem} becomes
\begin{equation}\label{vorticityequation}
    \partial_t \omega+\left(u\cdot \nabla\right)\omega+\beta u^2 =\nu\Delta \omega-\alpha \omega+\nabla \cross \varphi.
\end{equation}
It is not difficult to derive a similar definition of a solution for \eqref{vorticityequation} as in Definition \ref{solutiondef}.
 Then we have the following result:
 
\begin{cor}\label{wellposdvorticity}
For any $\nu,\alpha>0$ and any $\mathcal{G}_0$-measurable random variable $\omega_0(x) \in H$, problem \eqref{vorticityequation} has a unique solution $\omega(t)$, $t\in [0,T]$ for every fixed $T\in \mathbb{R}_+$, satisfying the initial condition $\omega(0)=\omega_0$, almost surely.
\end{cor}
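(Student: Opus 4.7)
The plan is to transfer the well-posedness already established at the velocity level in Theorem \ref{TheoSolution} to the vorticity level via the curl operation. Concretely, given $\omega_0 \in L^2$, I would first reconstruct an admissible initial velocity $u_0 \in H$ whose curl equals $\omega_0$. On the domain $\mathbb{T} \times I$ with the prescribed boundary conditions, this is done by solving the stream-function problem $-\Delta \psi = \omega_0$ with $\psi = 0$ on $\partial I$ and periodicity in $x^1$, then setting $u_0 = \nabla^{\perp}\psi$; standard elliptic regularity gives $u_0 \in H \cap H^1$, the correct boundary conditions, and $\curl u_0 = \omega_0$.

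Second, apply Theorem \ref{TheoSolution} with this $u_0$ to produce a unique solution $u$ of \eqref{Problem1} with trajectories in $\mathcal{X}=C(\mathbb{R}_+;H)\cap L^2_{\text{loc}}(\mathbb{R}_+;V)$. Define $\omega(t):=\curl u(t)$. Since $u\in L^2_{\text{loc}}(\mathbb{R}_+;V)$, automatically $\omega \in L^2_{\text{loc}}(\mathbb{R}_+;L^2)$, and the $\mathcal{G}_t$-adaptedness of $\omega$ is inherited from that of $u$. One then checks that taking the curl of identity \eqref{solutionstrong} produces exactly identity \eqref{vorticity equationsolution}: the term $\curl(f\widehat{k}\times u)$ reduces to $\beta u^2$ after a direct computation using $f=f_0+\beta x^2$ and $\div u = 0$, the viscosity and drag terms commute with $\curl$, the nonlinearity transforms as $\curl((u\cdot\nabla)u)=(u\cdot\nabla)\omega$ in 2D, and the noise term produces $\curl \zeta$ by linearity.

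For uniqueness, suppose $\tilde\omega$ is another solution to \eqref{vorticityeqproblem} with the same initial datum $\omega_0$. Reconstruct a velocity $\tilde u$ from $\tilde\omega$ by the stream-function procedure above applied at each time (consistent with the boundary conditions), so that $\curl \tilde u = \tilde\omega$ and $\tilde u$ has the required regularity. Then verify that $\tilde u$ satisfies \eqref{Problem1} with initial datum $u_0$, up to a possible gradient term absorbed into the pressure; uniqueness from Theorem \ref{TheoSolution} forces $\tilde u = u$ and hence $\tilde \omega = \omega$ almost surely.

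The main obstacle is the Biot–Savart step: unlike the fully periodic case of \cite{kuk}, here we have no-slip on $\partial I$ and periodicity only in $x^1$, so velocity reconstruction from vorticity is subtler and the boundary trace of $\tilde u$ must match the no-slip condition. This is precisely the anisotropy issue flagged throughout the appendix; however, since the stream-function problem $-\Delta \psi=\omega$ with $\psi|_{\partial I}=0$ is well-posed in $H^1_0(I)\otimes$ (periodic in $x^1$) with $\omega \in L^2$, the reconstructed $\tilde u=\nabla^\perp \psi$ automatically lies in $V$ and has vanishing normal trace; the tangential trace being zero follows from $\psi \equiv 0$ on $\partial I$ (so $\partial_{x^1}\psi|_{\partial I}=0$). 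Once this reconstruction is under control, the remainder of the proof is a direct application of Theorem \ref{TheoSolution} plus the computation $\curl(f\widehat k\times u)=\beta u^2$.
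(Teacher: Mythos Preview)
Your overall strategy---reduce to Theorem \ref{TheoSolution} and take the curl---is exactly the paper's one-line proof. The extra work you do (Biot--Savart reconstruction of $u_0$ from $\omega_0$, and of $\tilde u$ from $\tilde\omega$ for uniqueness) goes beyond what the paper writes, but it contains a real error.

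The gap is in the boundary-trace claim. With $\psi|_{\partial I}=0$ and $u=\nabla^\perp\psi=(-\partial_2\psi,\partial_1\psi)$, on the horizontal boundary $\partial I$ the \emph{normal} component is $u^2=\partial_1\psi$, and this indeed vanishes because $\psi$ is constant along $\partial I$. But the \emph{tangential} component is $u^1=-\partial_2\psi$, and nothing in the Dirichlet condition $\psi|_{\partial I}=0$ forces $\partial_2\psi|_{\partial I}=0$. You have swapped the roles of normal and tangential: the sentence ``the tangential trace being zero follows from $\psi\equiv 0$ on $\partial I$ (so $\partial_{x^1}\psi|_{\partial I}=0$)'' actually establishes $u^2=0$, not $u^1=0$. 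Consequently your reconstructed velocity satisfies no-penetration but not no-slip, so in general it does \emph{not} lie in $H$ (or $V$), and you cannot feed it into Theorem \ref{TheoSolution}. The same issue undermines the uniqueness step: from an arbitrary $\tilde\omega\in L^2$ you cannot in general produce a $\tilde u$ in the no-slip space $V$ with $\curl\tilde u=\tilde\omega$; the map $\curl:V\to L^2$ is not onto.

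The paper sidesteps this by reading the corollary as: $\omega_0$ is the curl of an admissible $u_0\in H$, solve for $u$ via Theorem \ref{TheoSolution}, and set $\omega=\curl u$. Uniqueness is then inherited directly from uniqueness of $u$, with no need for an inverse Biot--Savart construction. If you want to keep your more explicit write-up, you should either (i) restrict the statement to $\omega_0\in\curl(H)$, or (ii) drop the stream-function reconstruction and argue uniqueness at the vorticity level directly by an energy estimate on the difference $\omega-\tilde\omega$ (which is what the enstrophy-balance computation in Section \ref{energyenstrophybalance} already sets up).
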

This is a direct consequence of Theorem \ref{TheoSolution}, where we take the curl of the solution.

\section{Well-Posedness}\label{Well-Posedness}

This section is devoted to the proof of the well-posedness of the initial value problem \eqref{Problemleray} in terms of Definition \ref{Problemleray}. 
We follow the footsteps of \cite{kuk}. Here the idea is to study \eqref{Problemleray} by splitting it into stochastic Stokes
equation (cf. Theorem \ref{stokswellp}) and Navier-Stokes equations with random coefficients (cf. Theorem \ref{wellposed}). Since the Coriolis force appears only in the Navier-Stokes equations with random coefficients, we give a detailed proof of its well-posedness using the so-called Galerkin method. But because of the Coriolis force and choice of the boundary conditions,  our equations become intrinsically anisotropic, and well-posedness is not straightforward since we have an additional term to control along with the particular choice of the domain in contrast to \cite{Temam} where the proof is given for the deterministic case. 
Therefore, we revise existing results and redo estimates as in what follows, overcoming additional difficulties. Finally, we obtain the existence and uniqueness of the solution to problem \eqref{Problemleray} by summing the previous results appropriately (cf. Theorem \ref{TheoSolution}).

Then,
we write a general solution of \eqref{Problemleray} as
\begin{equation}
    u=z+v,
\end{equation}
where $z(t)$ is a process that solves the stochastic Stokes system, i.e.,
\begin{equation}\label{ProblemStokes}\left\{
    \begin{array}{rl}
     \partial_t z+\nu A z  &=\varphi(t,x); \\
     z(t,0,x^2)&=z(t,L,x^2);\\
      z(t,x^1,\partial I)&=0;\\
     z(0,x)&=0.
\end{array}
\right.
\end{equation}
and the function $v$ must satisfy the initial value problem
\begin{equation}\label{ProblemDet}\left\{
    \begin{array}{rl}
     \partial_t v+\nu A v+B(v+z)+F(v+z)+\alpha (v+z) &=0; \\
     \nabla \cdot v&=0;\\
     v(t,0,x^2)&=v(t,L,x^2);\\
      v(t,x^1,\partial I)&=0;\\
     v(0,x)&=u_0.
\end{array}
\right.
\end{equation}

The notion of solution for $z(t)$ is similar to $u(t)$ as in Definition \ref{solutiondef}, where one should replace $f_u(s)$ with $\nu Au$ in \eqref{solution}.
We have the following classical results for $z(t)$, we refer reader to \cite[Chapter 2, Proposition 2.4.2]{kuk} for the proof's detail. We should mention that the proof of \cite{kuk} concerns a problem similar to \eqref{ProblemStokes}, but with periodic boundary conditions. However, the exact same proof with minor adjustments applies to our case.

\begin{proposition}\label{stokswellp}
Problem \eqref{ProblemStokes} has a solution $z(\cdot)\in\mathcal{X}$ . Moreover, this solution is unique in the sense that if $\Tilde{z}(t)$ is another solution, then $z\equiv\Tilde{z}$ almost surely.
\end{proposition}
We recall the standard notation 
    \begin{align*}
        \langle u,v \rangle&=\int_{\mathbb{T}\cross I}u\cdot v \ dx.
    \end{align*}
and we use it to summarise some useful properties of $B(\cdot,\cdot)$, and $F$. 
\begin{proposition}\label{properties}
 For any divergence free smooth vector fields, $u$, $v$, $w$ with mentioned boundary conditions (zero in $x^2$, and periodic in $x^1$) for we have 
\begin{align}
    \langle v,Fv \rangle&=0,\label{eq1}\\
    \langle B(u,v),v\rangle&=0,\label{eq2}\\
    \langle B(u,v),w \rangle&=-\langle B(u,w),v \rangle,\label{eq3}\\
    |\langle B(u,v),w \rangle|&\leq C\norm{u}_{H^{1/2}}\norm{v}_{H^{1/2}}\norm{w}_{H^{1}},\label{ineq4}\\
    |\langle B(v,u),v \rangle|&\leq \frac{1}{4}\norm{v}^2_{H^{1}}+C\norm{v}^2_{L^{2}}\norm{u}^2_{H^{1}},\label{ineq5}\\
    \norm{B(u,v)}_{H^{-1}}&\leq C\norm{u}_{H^{1/2}}\norm{v}_{H^{1/2}}
    \label{ineq6}.
\end{align} 
\end{proposition}

\begin{proof}
\begin{itemize}
    \item [1)] By definition 
    $Fv=f(-v^2,v^1)=(f_0+\beta x^2)(-v^2,v^1)$, where $f_0, \beta$ are constants. 
    This means $v$ and $Fv$ are orthogonal and we have $\langle v,Fv \rangle=0$.
    \item[2)] Integrating by parts, and using the boundary conditions mentioned above, we derive 
    \begin{align*}
        \langle B(u,v),v\rangle&=\sum_{j,l=1}^2\int_{\mathbb{T}\times I} u^j(\partial_j v^l)v^l dx=\sum_{j,l=1}^2\frac{1}{2}\int_{\mathbb{T}\times I} u^j\partial_j|v|^2 dx\\&=-\frac{1}{2}\int_{\mathbb{T}\times I} (\nabla \cdot u)|v|^2 dx=0.
    \end{align*}
    \item [3)] By \eqref{eq2}, $\langle B(u,v+w),v+w\rangle=0\Rightarrow \langle B(u,v),w\rangle+\langle B(u,w),v\rangle=0$.
    \item[4)] Applying \eqref{eq3}, Hölder inequality, and the continuous embedding $H^{1/2}\subset L^4$, we obtain 
    \begin{align*}
        |\langle B(u,v),w \rangle|&\leq C_1\int_{\mathbb{T}\times I} |u||v||\nabla w|\leq C_2 \norm{\nabla w}_{L^2}\norm{u}_{L^4}\norm{v}_{L^4}\\
        &\leq C \norm{w}_{H^1}\norm{u}_{H^{1/2}}\norm{v}_{H^{1/2}}.
    \end{align*}
    \item[5)] Using \eqref{ineq4} and Cauchy-Schwarz inequality we have
    \begin{align*}
        |\langle B(v,u),v \rangle|&\leq\norm{u}_{H^{1}}\norm{v}^2_{H^{1/2}}\leq\norm{u}_{H^{1}}\norm{v}_{H^{1}}\norm{v}_{L^{2}}\leq \frac{1}{4}\norm{v}^2_{H^{1}}+C\norm{v}^2_{L^{2}}\norm{u}^2_{H^{1}}.
    \end{align*}
    \item[6)] Follows from \eqref{ineq4} by duality. 
\end{itemize}
Let us emphasize that our choice of boundary conditions do not
affect the argument in the last three estimates.
\end{proof}

\begin{definition}
Let us introduce the function space
\begin{equation}
    \mathcal{H}=\left\{ u\in L^2\left([0,T];V\right);\partial_t u\in L^2\left([0,T];V'\right) \right\}
\end{equation}
and endow it with the norm
\begin{equation}
    \norm{v}_{\mathcal{H}}=\norm{v}_{L^2_{[0,T]}H^1(\mathbb{T}\times I;\mathbb{R}^2)}+\norm{\partial_t v}_{L^2_{[0,T]}H^{-1}(\mathbb{T}\times I;\mathbb{R}^2)}.
\end{equation}
\end{definition}

\begin{proposition}\label{wellposed}
Let $\nu$ and T be some positive constants. Then for any $u_0\in H$ and $z\in \mathcal{X}_T:=C\left([0,T]; H\right)\cap\  L^2\left([0,T];V\right)$ problem \eqref{ProblemDet} has a unique solution $v\in\mathcal{H}$. Moreover, $v\in L^\infty((0,T),H)$ and $v$ is weakly continuous from $[0,T]$ into $H$.
\end{proposition}

\begin{proof}
In this proof, we follow the lines of \cite{Temam}. However, since we have a new term corresponding to the Coriolis force and since the process $z(t)$ (cf. Proposition \ref{stokswellp}) appears in \eqref{ProblemDet}, the proof is not straightforward. In the following, we address these two difficulties. \\
$\mathit{Galerkin\ method:}$ \\
Since $V$ is separable and $D_\sigma$ is dense in $V$, there exists a sequence $\{w_m\}_{m\in \mathbb{N}}\subset D_\sigma$ which is a basis in $V$. For each $m\in \mathbb{N}$, we define an approximate solution $v_m$ of \eqref{ProblemDet} as

\begin{equation}\label{approx solution temam}
    v_m=\sum_{i=1}^m g_{im}(t)w_i,
\end{equation}
such that
\begin{equation}\label{weak approx sol}
    \langle\partial_t v_m,w_j\rangle+ \nu\langle \nabla v_m,\nabla w_j\rangle+ \langle B(v_m+z),w_j\rangle+ \langle F (v_m+z),w_j\rangle+\langle\alpha (v_m+z),w_j\rangle=0,
\end{equation}
with
\begin{equation} \label{weak aprox in.data}
    v_m(0)=v_{0m}
\end{equation}
for $t\in[0,T]$ and $j=1,...,m$, where $v_{0m}$ is the orthogonal projection in $H$ of $u_0$ onto the space spanned by $w_1,...,w_m$.

Equations \eqref{weak approx sol}-\eqref{weak aprox in.data} form a nonlinear differential system for the functions $g_{1m},...,g_{mm}$. In fact, using \eqref{approx solution temam} we rewrite \eqref{weak approx sol} as follows 
\begin{align*}
    &\sum_{i=1}^m\langle w_i,w_j\rangle\partial_t g_{im}+\nu\sum_{i=1}^m\langle\nabla w_i,\nabla w_j\rangle g_{im}+ \sum_{i,l=1}^m\langle B(w_i,w_l),w_j\rangle g_{im}g_{lm}\\
    +&\sum_{l=1}^m\langle B(z,w_l),w_j\rangle g_{lm}
    +\sum_{i=1}^m\langle B(w_i,z),w_j\rangle g_{im}
    +\langle B(z,z),w_j\rangle
    + \sum_{i=1}^m\langle F w_i,w_j\rangle g_{im}
    \\+& \langle F z,w_j\rangle
+\alpha\sum_{i=1}^m\langle w_i,w_j\rangle g_{im}
    +\alpha\langle z,w_j\rangle=0.
\end{align*}
Inverting the non-singular matrix with elements $\langle w_i,w_j\rangle$, $1\leq i,j\leq m$, we can write the differential equations in the usual form
\begin{align}\label{longequationtemam}
    &\partial_t g_{im}(t)+\sum_{j=1}^m c^1_{ij} g_{jm}(t)+ \sum_{j,k=1}^m c^2_{ijk} g_{jm}(t)g_{km}(t)+\sum_{j=1}^m c^3_{ij}(t)  g_{jm}(t)
    +\sum_{j=1}^m c^4_{ij}(t) g_{jm}(t)\notag\\
    &+\sum_{j=1}^m c^5_{ij}\langle B(z,z),w_j\rangle
    + \sum_{j=1}^m c^6_{ij} g_{jm}(t)
    +\sum_{j=1}^m c^7_{ij} \langle F z,w_j\rangle
    +\sum_{j=1}^m c^8_{ij} g_{jm}(t)\\
    &+\sum_{j=1}^m c^9_{ij}\langle z,w_j\rangle=0,\notag
\end{align}
where $c^n_\star$ are constants for $n\neq 3,4$. We have non-constant coefficients in $c^3_{ij}$ and $c^4_{ij}$ since $z$ is time dependent. However, $z(t)$ is continuous so we can proceed as follows.
We rewrite \eqref{longequationtemam} as,
\begin{align}\label{nonlinear system temam}
    &\partial_t g_{im}(t)+\sum_{j=1}^m \alpha^1_{ij}(t) g_{jm}(t)+ \sum_{j,k=1}^m c^2_{ijk} g_{jm}(t)g_{km}(t)
    +\sum_{j=1}^m \alpha^2_{ij}\langle h(t),w_j\rangle=0
\end{align}
where 
\begin{equation}\label{function h}
    h(t)=B(z(t),z(t))+Fz(t)+z(t).
\end{equation}
The condition $v_m(0)=v_{0m}$ is equivalent to $m$ scalar initial conditions
\begin{equation}\label{initial condition temam}
    g_{im}(0)=v_{0m}^i, \text{ i-th component }.
\end{equation}

The nonlinear differential system \eqref{nonlinear system temam} with the initial condition \eqref{initial condition temam} has a maximal solution defined on some interval $[0,t_m]$. If $t_m<T$, then $|v_m(t)|$ must tend to $\infty$ as $t\to t_m$. The following a priori estimates show that this does not happen and therefore $t_m=T$.\\
$\mathit{A\ priori\ estimate:}$\\
We multiply \eqref{weak approx sol} by $g_{jm}(t)$ and add these equations for $j=1,...,m$.
By Proposition \ref{properties} we get
\begin{equation}
    \langle\partial_t v_m,v_m\rangle+ \nu\norm{\nabla v_m}^2_{L^2}+ \langle B(v_m,z),v_m\rangle+\langle B(z,z),v_m\rangle+ \langle F z,v_m\rangle+\langle\alpha (v_m+z),v_m\rangle=0.
\end{equation}
Using notation \eqref{function h}, we derive
\begin{align*}
    &\frac{1}{2}\frac{\partial}{\partial t}\norm{v_m(t)}^2_{L^2}=\langle v_m(t),\partial_t v_m(t)\rangle\\
    =&-\norm{\nabla v_m}^2_{L^2}-\langle v_m,B(v_m,z)\rangle-\langle v_m,B(z,z)\rangle-\langle v_m,Fz\rangle-\alpha\langle v_m,v_m\rangle-\alpha\langle v_m,z\rangle \\
    =&-\norm{\nabla v_m}^2_{L^2}+\langle v_m,B(v_m,z)\rangle-\alpha\langle v_m,v_m\rangle-\langle v_m,Fz+B(z,z)+ z \rangle\\
    \leq& -\norm{\nabla v_m}^2_{L^2}+\frac{1}{4}\norm{ v_m}^2_{H^1}-\alpha\norm{v_m}^2_{L^2}+c\left(\norm{v_m}^2_{L^2}\norm{z}^2_{H^1}+\norm{h}^2_{H^{-1}}\right)\\
    \leq& -\norm{\nabla v_m}^2_{L^2}+\frac{1}{2}\left(\norm{v_m}^2_{L^2}+\norm{\nabla v_m}^2_{L^2}\right)-\alpha\norm{v_m}^2_{L^2}+c\left(\norm{v_m}^2_{L^2}\norm{z}^2_{H^1}+\norm{h}^2_{H^{-1}}\right).
\end{align*}
So that we obtain,
\begin{equation*}
    \frac{\partial}{\partial t}\norm{v_m(t)}^2_{L^2}+\norm{\nabla v_m(t)}^2_{L^2}+\alpha\norm{v_m}^2_{L^2}\leq C\norm{v_m(t)}^2_{L^2}(1+\norm{z}^2_{H^1})+C\norm{h}^2_{H^{-1}}.
\end{equation*}
Since $\alpha\norm{v_m}^2_{L^2}\geq 0$, we have the following inequality:
\begin{equation}\label{comp0}
    \frac{\partial}{\partial t}\norm{v_m(t)}^2_{L^2}+\norm{\nabla v_m(t)}^2_{L^2}\leq C\norm{v_m(t)}^2_{L^2}(1+\norm{z}^2_{H^1})+C\norm{h}^2_{H^{-1}}.
\end{equation}
Furthermore, we observe the following bounds: 
\begin{itemize}
    \item 
    Bounding $H^{-1}$, by 
    $L^2$, using the fact that $(f_0+ \beta x^2)$ is uniformly bounded on $\mathbb{T} \times I$, and taking advantage of the fact that rotation does not change the $L^2$ norm, we get:
    \begin{align}\label{comp1}
\norm{Fz}_{H^{-1}}&\leq\norm{Fz}_{L^2}\leq\norm{(f_0+\beta x^2) \Gamma z}_{L^2}\leq c\norm{z}_{L^2}\leq c\norm{z}_{L^\infty_T L^2}\leq C_T
    \end{align}
    \item By the definition of $B$, separating $\langle k \rangle \cdot 1/\langle k \rangle$ and a Cauchy-Schwartz inequality, we get:
    \begin{align}\label{comp2}
        \norm{B(z,z)}_{H^{-1}}&\leq c\norm{z}^2_{H^{1/2}}\leq c\norm{z}_{L^2}\norm{z}_{H^1}\leq c(\norm{z}^2_{L^2}+\norm{z}^2_{H^1})\notag\\ &\leq c(\norm{z}^2_{L^\infty_T L^2}+\norm{z}^2_{H^1}) \leq C_T(1+\norm{z}^2_{H^1})
    \end{align} 
    \item Thanks to the fact that $z \in 
    \mathcal{X}=C([0,T];L^2_{\sigma})$
    \begin{align}\label{comp3}
        \norm{\alpha z}_{H^{-1}}\leq\alpha\norm{z}_{L^2}\leq\alpha\norm{z}_{L^\infty_T L^2}= C.
    \end{align} 
\end{itemize}
By definition \eqref{function h}, due to \eqref{comp0}, \eqref{comp1}, \eqref{comp2},
and \eqref{comp3} we have
\begin{equation}\label{ineq}
    \frac{\partial}{\partial t}\norm{v_m(t)}^2_{L^2}+\norm{\nabla v_m(t)}^2_{L^2}\leq C\norm{v_m(t)}^2_{L^2}(1+\norm{z}^2_{H^1})+C_T(1+\norm{z}^2_{H^1}).
\end{equation}
Let us consider, 
\begin{equation*}
    \frac{\partial}{\partial t}\norm{v_m(t)}^2_{L^2}\leq C\norm{v_m(t)}^2_{L^2}(1+\norm{z}^2_{H^1})+C_T(1+\norm{z}^2_{H^1}).
\end{equation*}
Using Gronwall's inequality, 
\begin{align*}
 \norm{v_m}^2_{L^2}\leq&\norm{v_{0m}}^2_{L^2}e^{C\int_0^t(1+\norm{z}^2_{H^1})ds}\\
+&C\int_0^t(1+\norm{z}^2_{H^1})e^{C(\int_0^t(1+\norm{z}^2_{H^1})ds-\int_0^s(1+\norm{z}^2_{H^1})d\alpha)}ds,
\end{align*}
since $z\in\mathcal{X}$ it follows that,
\begin{equation}\label{bound}
    \norm{v_m(t)}^2_{L^2}\leq C_T.
\end{equation}
After an integration in $[0,T]$ we get
\begin{equation}
    \norm{v_m(t)}_{L^2_T L^2}\leq C_T.
\end{equation}
Moreover, by \eqref{ineq}, we have
\begin{equation*}
    \norm{\nabla v_m(t)}^2_{L^2}\leq C\norm{v_m(t)}^2_{L^2}(1+\norm{z}^2_{H^1})+C_T(1+\norm{z}^2_{H^1}).
\end{equation*}
Thanks to \eqref{bound} and integrating in $[0,T]$ we obtain 
\begin{equation}\label{boundgradient}
    \norm{\nabla v_m(t)}_{L^2_T L^2}\leq C_T.
\end{equation}
We have thus shown that 
\begin{equation}\label{boundL2H1}
    \norm{v_m}_{L^2_tH^1}\leq C_T.
\end{equation}
The above estimates imply that,
\begin{equation}\label{temam linf bound}
    \text{The sequence $v_m$ remains in a bounded set of }L^\infty([0,T], H).
\end{equation}
and
\begin{equation}
    \text{The sequence $v_m$ remains in a bounded set of }L^2([0,T], V).\label{Temam l2 bound}
\end{equation}
$\mathit{Passage\ to\ the\ limit.}$\\
Let 
\begin{equation}
    \Tilde{v}_m:=\left\{\begin{array}{ll}
         v_m &\text{ on } [0,T] \\
          0 &\text{ on } \mathbb{R}\backslash[0,T].
    \end{array}\right.
\end{equation}
The Fourier transform (in time) of $\Tilde{v}_m$ is denoted by $\hat{v}_m$.\\
We want to show that 
\begin{equation}
    \int_{-\infty}^{+\infty}|\tau|^{2\gamma}|\hat{v}_m(\tau)|^2 d\tau\leq C, \text{ for some }\gamma>0.
\end{equation}
Along with \eqref{Temam l2 bound}, this will imply that 
\begin{equation}\label{temam bounded sequence fourier}
    \Tilde{v}_m \text{ belongs to a bounded set of } \mathcal{H}^\gamma(\mathbb{R}, V,H)
\end{equation}
where 
\begin{equation*}
    \mathcal{H}^\gamma(\mathbb{R}, V,H)=\{v\in L^2(\mathbb{R},V);d^\gamma_t v\in L^2(\mathbb{R},H)\}
\end{equation*}
so that we can use the compactness result in \cite[Theorem 2.2]{Temam}.

We observe that \eqref{weak approx sol} can be written 
\begin{equation}\label{extended equation for u}
    d_t \langle \Tilde{v}_m,w_j\rangle=\langle \Tilde{f}_m,w_j\rangle+\langle v_{0m},w_j\rangle\delta_0-\langle v_m(T),w_j\rangle\delta_T, \ \ j=1,...,m
\end{equation}
where $\delta_0$ and $\delta_T$ are Dirac distributions at $0$ and $T$,
\begin{align}
    f_m:=-\nu Av_m-B(v_m+z)-F(v_m+z)-\alpha(v_m+z)
\end{align}
and 
\begin{equation}
    \Tilde{f}_m:=\left\{\begin{array}{ll}
         f_m &\text{ on } [0,T] \\
          0 &\text{ on } \mathbb{R}\backslash[0,T].
    \end{array}\right.
\end{equation}
By Fourier transform \eqref{extended equation for u}, multiply by $\hat{g}_{jm}(\tau)$ (Fourier transform of $\Tilde{g}_{jm}$ ) and add the resulting equations for $j=1,...,m$, we get 
\begin{equation}\label{fourier relation temam}
    2i\pi\norm{\hat{v}_m(\tau)}^2_{L^2}=\langle \hat{f}_m(\tau),\hat{v}_m(\tau)\rangle+\langle v_{0m},\hat{v}_m(\tau)\rangle-\langle v_m(T),\hat{v}_m(\tau)\rangle e^{-2i\pi T\tau}.
\end{equation}
First, using same techniques as in \eqref{comp1},\eqref{comp2} and \eqref{comp3}, i.e., bounding $H^{-1}$ by $L^2$, using the fact that $(f_0+ \beta)y$ is uniformly bounded on $\mathbb{T} \times I$, taking advantage of the fact that rotation does not change the $L^2$ norm, we derive
\begin{equation*}
    \int_0^T \norm{f_m(t)}_{H^{-1}}\leq C_T.
\end{equation*}
In fact,
\begin{itemize}
    \item $\int_0^T\norm{\Delta v_m}^2_{H^{-1}}dt
    \leq c\int_0^T\norm{v_m}^2_{H^1}dt\leq C_T$,
    \item $\int_0^T\norm{B(v_m,v_m)}^2_{H^{-1}}dt\leq c\int_0^T\norm{v_m}^2_{L^2}\norm{v_m}^2_{H^1}dt\leq C_T\int_0^T\norm{v_m}^2_{H^1}dt\leq C_T$,
    \item $\int_0^T\norm{B(z,z)}^2_{H^{-1}}dt\leq c\int_0^T\norm{z}^2_{L^2}\norm{z}^2_{H^1}dt\leq c\int_0^T\norm{z}^2_{L^\infty_T L^2}\norm{z}^2_{H^1}dt\leq C_T$,
    \item $\int_0^T\norm{B(z,v_m)}^2_{H^{-1}}dt\leq c\int_0^T\norm{v_m}^2_{H^{1/2}}\norm{ z}^2_{H^{1/2}}dt\leq C_T\int_0^T\norm{v_m}_{H^{1}}\norm{z}_{H^{1}}dt\\
    \leq C_T\norm{v_m}_{L^2_TH^{1}}\norm{z}_{L^2_TH^{1}}\leq C_T$,
    \item $\int_0^T\norm{B(v_m,z)}^2_{H^{-1}}dt\leq c\int_0^T\norm{v_m}^2_{H^{1/2}}\norm{ z}^2_{H^{1/2}}dt\leq C_T$,
    \item $\int_0^T\norm{Fv_m}^2_{H^{-1}}dt\leq c\int_0^T\norm{v_m}^2_{L^2}dt\leq C_T$,
    \item $\int_0^T\norm{Fz}^2_{H^{-1}}dt\leq c\int_0^T\norm{z}^2_{L^2}dt\leq C_T$,
    \item $\int_0^T\norm{\alpha v_m}^2_{H^{-1}}dt\leq\alpha\int_0^T\norm{ v_m}^2_{L^2}dt\leq C_T$
    \item $\int_0^T\norm{\alpha z}^2_{H^{-1}}dt\leq\alpha\int_0^T\norm{ z}^2_{L^2}dt\leq C_T$.
\end{itemize}
according to \eqref{Temam l2 bound} and assumptions on $z$. Therefore,
\begin{equation*}
    \sup_{\tau\in\mathbb{R}}\norm{\hat{f}_m(\tau)}_{H^{-1}}\leq C\ \ \text{for all }m\in\mathbb{N}.
\end{equation*}
Due to \eqref{bound}
\begin{equation*}
    \norm{v_m(0)}_{L^2}\leq C\ \text{ and }\ \norm{v_m(T)}_{L^2}\leq C.
\end{equation*}
Still separating $\langle k \rangle \cdot 1/\langle k \rangle$ and using Cauchy-Schwartz inequality, we deduce from \eqref{fourier relation temam} that 
\begin{equation}
    |\tau|\norm{\hat{v}_m(\tau)}^2_{L^2}\leq C\norm{\hat{v}(\tau)}_{H^1}.
\end{equation}
For $\gamma$ fixed, $\gamma<1/4$, we observe that 
\begin{equation}
    |\tau|^{2\gamma}\leq C(\gamma)\frac{1+|\tau|}{1+|\tau|^{1-2\gamma}},\ \ \forall \tau\in\mathbb{R}.
\end{equation}
Thus,
\begin{equation*}
    \int_{-\infty}^{+\infty}|\tau|^{2\gamma}\norm{\hat{v}_m(\tau)}^2_{L^2} d\tau\leq C_1 \int_{-\infty}^{+\infty}\frac{\norm{\hat{v}_m(\tau)}_{H^1}}{1+|\tau|^{1-2\gamma}} d\tau+C_2 \int_{-\infty}^{+\infty}\norm{\hat{v}_m(\tau)}^2_{H^1} d\tau
\end{equation*}
By Parseval equality and \eqref{Temam l2 bound} the last integral is bounded as $m\to\infty$. On the other hand we bound the first integral by Cauchy-Schwartz inequality by the following term
\begin{equation*}
    \left(\int_{-\infty}^{+\infty}\frac{1}{(1+|\tau|^{1-2\gamma})^2}d\tau\right)^{1/2}\left(\int_0^T\norm{v_m(t)}^2_{H^1}dt\right)^{1/2}
\end{equation*}
which is finite since $\gamma<1/4$, and bounded as $m\to\infty$ by \eqref{Temam l2 bound}.

Now we can directly use the argument in \cite{Temam}. We report the main steps here for the reader's convenience.

The estimates \eqref{temam linf bound} and \eqref{Temam l2 bound} enable us to assert the existence of an element $u\in L^2([0,T],V)\cap L^\infty ([0,T], H)$ and a sub-sequence $v_{m'}$ such that 
\begin{equation}
    v_{m'}\to v \text{ in } L^2([0,T],V) \text{ weakly }
\end{equation}
and  
\begin{equation}
    v_{m'}\to v \text{ in } L^\infty([0,T],H) \text{ weakly-star}
\end{equation}
as $m'\to\infty$.

Due \eqref{temam bounded sequence fourier} and \cite[Theorem 2.2]{Temam}, we also have 
\begin{equation}
    v_{m'}\to v \text{ in }L^2([0,T],H) \text{ strongly}. 
\end{equation}
Let $\psi$ be a continuous differentiable function on $[0,T]$ with $\psi(T)=0$. We multiply \eqref{weak approx sol} by $\psi(t)$ and integrate by parts in $t$  the first term 
\begin{align}
    &-\int_0^T \langle v_{m'},w_j\partial_t\psi(t)\rangle+ \nu\int_0^T\langle \nabla v_{m'},\nabla w_j\psi(t)\rangle dt+ \int_0^T\langle B(v_{m'}+z),w_j\psi(t)\rangle dt\notag\\
    &+ \int_0^T\langle F (v_{m'}+z),w_j\psi(t)\rangle dt+\int_0^T\langle\alpha (v_{m'}+z),w_j\psi(t)\rangle dt=\langle v_{0m'},w_j\rangle\psi(0),\label{limit equality temam}
\end{align}
We can pass to limit for $m'\to\infty$ by dominated convergence theorem. For the nonlinear term, we apply \cite[Lemma 3.2]{Temam}. 

Since \eqref{limit equality temam} holds for general $w_j$, by linearity and continuity 
argument 
we can deduce that the same relation holds for any $g\in V$. Now writing in particular for $\psi=\phi\in C^\infty_c((0,T))$ we see that $v$ satisfies \eqref{ProblemDet} in distribution sense. 

Finally, it remains to prove that $v$ satisfies the initial condition $u_0$. As mentioned in \cite{Temam}, we have an equivalent weak formulation of \eqref{ProblemDet}, namely, 
\begin{align*}
    & \partial_t\langle v,g\rangle+ \nu\langle \nabla v,\nabla g\rangle + \langle B(v+z),g\rangle+\langle F (v+z),g\rangle+\langle\alpha (v+z),g\rangle =0
\end{align*}
for any $g\in V$.
Multiplying by $\psi$ and integrating, we derive 
\begin{align*}
    &-\int_0^T \langle v,g\partial_t\psi(t)\rangle+ \nu\int_0^T\langle \nabla v,\nabla g\psi(t)\rangle dt+ \int_0^T\langle B(v+z),g\psi(t)\rangle dt\\
    &+ \int_0^T\langle F (v+z),g\psi(t)\rangle dt+\int_0^T\langle\alpha (v+z),g\psi(t)\rangle dt=\langle v(0),g\rangle\psi(0)
\end{align*}
for any $g\in V$. By comparison with \eqref{limit equality temam}, 
\begin{equation*}
    \langle v(0)-u_0, g\rangle\psi(0)=0.
\end{equation*}
We can chose $\psi$ with $\psi(0)=1$ so that 
\begin{equation*}
    \langle v(0)-u_0, g\rangle=0 \ \ \forall g\in V.
\end{equation*}
This imply $v(0)=u_0$.
\end{proof}

The solution $v$ of $\eqref{ProblemDet}$ satisfies some further regularity properties as shown in the following theorem.
\begin{theorem}
    The solution $v$ of problem \eqref{ProblemDet} given by Proposition \ref{wellposed} is unique. Moreover $v$ is almost everywhere equal to a function continuous from $[0,T]$ into $H$ and
    \begin{equation}
        v(t)\to u_0, \text{ in }H,\text{ as } t\to 0.
    \end{equation}
\end{theorem}
\begin{proof}
Since we proved the same regularity properties of $v$ as in \cite{Temam}, the statement of the Theorem is straightforward from \cite[Theorem 3.2]{Temam}. Note that here, boundary conditions and the new term do not change the classical argument which involves a growth bound on the difference of the solutions and a Gronwall argument concluding the uniqueness.
\end{proof}

 Combining the previous results we derive the following theorem.

\begin{theorem}\label{TheoSolution}
For any $\nu,\alpha>0$ and any $\mathcal{G}_0$-measurable random variable $u_0(x) \in H$, problem \eqref{Problemleray} has a unique solution $u(t)$, $t\in [0,T]$ for every fixed $T\in \mathbb{R}_+$, satisfying the initial condition $u(0)=u_0$, almost surely. Furthermore, the solution $u(t)$ possesses the following properties.
\begin{itemize}
    \item Almost all trajectories of $u(t)$ are continuous with range in $H$ and locally square integrable with range in $V$.
    \item The process $u(t)$ can be written in the form
    \begin{equation}\label{property2}
        u(t)=u_0+\int_0^t f(s) ds+\zeta(t), \ \ \ t\geq0,
    \end{equation}
    where $f(t):=\nu\Delta u(t)-\alpha u(t)-B(u,u)-Fu(t)$ is a $V'$-valued $\mathcal{G}_t$-progressively measurable process such that 
    \begin{equation}\label{propertyprob}
        \mathbb{P}\left\{ \int_0^T\norm{f(t)}^2_{V'}dt<\infty \text{ for any } T>0\right\}=1.
    \end{equation}
\end{itemize}
\end{theorem}

\begin{proof}
We first prove the uniqueness.
 If $u$ and $\Tilde{u}$ are two solutions, then for almost every $\omega\in\Omega$ the difference $w=u-\Tilde{u}$ belongs to $\mathcal{X}$, vanishes at $t=0$ and satisfies the equation
\begin{equation*}
    \partial_t w+B(w,u)+B(\tilde{u},w)+F w+\alpha w+\nu A w=0.
\end{equation*}
Then, multiplying by $w$, integrating and following the same steps as in the proof of Proposition \ref{wellposed}, it follows that $w=0$ almost surely.

To prove the existence, we denote by $z(t)$ the solution of \eqref{ProblemStokes} vanishing at zero. Let $\Omega_0\subset\Omega$ be a set of full measure that consists of those $\omega\in\Omega$ for which $z\in \mathcal{X}$. We define $v\in \mathcal{X}$ as the solution of \eqref{ProblemDet} for $\omega\in\Omega_0$ and set $v=0$ on the complement of $\Omega_0$. Then the random process $u=v+z$ is a solution of \eqref{Problemleray}.

We now prove the two properties. The continuity of trajectories of $u$ in $L^2_\sigma$ follows for a similar property for $z$ and $v$.  Relation \eqref{solution} implies that $u(t)$ can be written in the form \eqref{property2} with $f(t)=-\nu A u(t)-\alpha u(t)-B(u,u)-Fu(t)$. Finally, \eqref{propertyprob} follows from Proposition \ref{stokswellp} and \ref{wellposed}. In fact, due to the computations made in the poof of Proposition \ref{wellposed}, Step 1, it is sufficient to show
\begin{equation*}
    \int_0^T\norm{\Delta z}^2_{H^{-1}}dt
    \leq c\int_0^T\norm{z}^2_{H^1}dt\leq C_T
\end{equation*}
where the last estimate follows from Proposition \ref{stokswellp}.
\end{proof}

\section{Existence of a Stationary Distribution}\label{stationarysolution}

Let us start by deriving an energy balance relation for problem \eqref{Problem}. The latter refers to an equation that describes how the energy is transferred in the fluid \cite{bed2D,kuk}. Moreover, this relation provides estimates on $u$ and $\nabla u$ (cf. \eqref{H1bound}), which are crucial to showing the existence of a stationary measure. 
As we see below, the energy balance equation derived here is identical to the energy balance in the absence of the Coriolis force. This is because the Coriolis force vanishes due to the orthogonality property \eqref{eq1}.

\begin{proposition}\label{propenergybalance}
For any $\nu,\alpha>0$ and any $\mathcal{G}_0$-measurable random variable $u_0(x) \in H$ such that $\mathbb{E}\norm{u_0}^2_{L^2}<\infty$, the following relation holds for a solution $u(t)$ of problem \eqref{Problem1},
\begin{align}\label{equality}
    \mathbb{E}\norm{u(t)}^2_{L^2}&+2\alpha\mathbb{E}\int_0^{t} \norm{u(s)}_{L^2}^2 ds +2\nu\mathbb{E}\int_0^{t} \norm{\nabla u(s)}_{L^2}^2 ds =\mathbb{E}\norm{u_0}^2_{L^2}+2\varepsilon t, 
\end{align}
for $t\geq0$, where $\mathbb{E}$ denotes the expectation w.r.t the law of  the process $u$. 
\end{proposition}

\begin{proof}
We use the It\^{o} formula in Hilbert space (cf. \cite[Theorem 7.7.5]{kuk}) to the functional $\Phi(u)=\norm{u}^2_{L^2}$.
Since
\begin{equation*}
    \partial_u\Phi(u;v)=2\langle u,v\rangle, \ \ \ \partial_u^2\Phi(u;v)=2\norm{v}^2_{L^2},
\end{equation*}
with an application of Cauchy-Schwartz and the dominated convergence theorem we see that the assumptions of the It\^{o} formula are satisfied.

Then, we have 
\begin{align}\label{eqinter}
    \norm{u(t\wedge\tau_n)}^2_{L^2}=\norm{u_0}^2_{L^2}&+2\int_0^{t\wedge\tau_n} \langle u,\nu\Delta u-\alpha u\rangle+\varepsilon\ ds\\
    &+2\sum_{j=1}^\infty b_j\underbrace{\int_0^{t\wedge\tau_n} \langle u,e_j\rangle d\beta_j(s)}_{=:M(t\wedge\tau_n)},\notag
\end{align}
where we set 
\begin{equation*}
    \tau_n=\inf\{t\geq0;\ \norm{u(t)}_{L^2}>n\}.
\end{equation*}
The nonlinear term and pressure vanish after integration by parts as follows
\begin{align}
    \langle (u\cdot \nabla)u,u\rangle&=\sum_{j,l=1}^2\int_{\mathbb{T}\times I} u^j(\partial_j u^l)u^l dx=\sum_{j=1}^2\frac{1}{2}\int_{\mathbb{T}\times I} u^j\partial_j|u|^2 dx\notag\\
        &=-\frac{1}{2}\int_{\mathbb{T}\times I} (\nabla \cdot u)|u|^2 dx=0\label{severalintegration2}\\
    \langle \nabla p, u\rangle&=\sum_{j=1}^2\int_{\mathbb{T}\cross I}u^j\partial_j p dx=-\sum_{j=1}^2\int_{\mathbb{T}\cross I}p\partial_j u^j dx=-\langle p,\nabla \cdot u\rangle=0.
\end{align}
On the other hand $\langle u,f\hat{k}\cross u\rangle=f\langle(u^1,u^2),(-u^2,u^1)\rangle=0$ by orthogonality.

Taking the mean value in \eqref{eqinter} we obtain
\begin{align}
     \mathbb{E}\norm{u(t\wedge\tau_n)}^2_{L^2}&+2\alpha\mathbb{E}\int_0^{t\wedge\tau_n} \norm{u}^2_{L^2}\ ds+2\nu\mathbb{E}\int_0^{t\wedge\tau_n} \norm{\nabla u}^2_{L^2}\ ds\notag\\
     &=\mathbb{E}\norm{u_0}^2_{L^2}+2\varepsilon\mathbb{E}(t\wedge\tau_n)+2\mathbb{E}\sum_{j=1}^\infty b_j M(t\wedge\tau_n)
\end{align}
We remark that, since $\beta_j$ is a sequence of Brownian motion, the last term on the right hand side is a martingale stochastic process. Then we can use Doob's optional sampling Theorem \cite{grimmett} to show 
\begin{equation*}
    \mathbb{E}\sum_{j=1}^\infty b_j M(t\wedge\tau_n)=\mathbb{E}\sum_{j=1}^\infty b_j M(0)=0.
\end{equation*}
It follows, 
\begin{align}\label{eqalim}
    \mathbb{E}\norm{u(t\wedge\tau_n)}^2_{L^2}+2\alpha\mathbb{E}\int_0^{t\wedge\tau_n} \norm{u}^2_{L^2}\ ds&+2\nu\mathbb{E}\int_0^{t\wedge\tau_n} \norm{\nabla u}^2_{L^2}\ ds\\&=\mathbb{E}\norm{u_0}^2_{L^2}+2\varepsilon\mathbb{E}(t\wedge\tau_n).\notag
\end{align}
Since the trajectories of $u(t)$ are continuous $H-$valued functions of time, we have $\tau_n\to\infty$ as $n\to\infty$. Passing to the limit in \eqref{eqalim} as $n\to\infty$ and using the monotone convergence theorem, we arrive at \eqref{equality}.
\end{proof}

Denote by $\mathbf{B}$ and $\mathbf{B}'$ the Markov semigroups associated with $P_t(u,\cdot)$, which is the transition kernel defined by the process $u_t$. 
By the  Bogolyubov-Krylov argument (cf. \cite[Section 2.5.1]{kuk}), the existence of a stationary measure will be established if we show that the family $\{\overline{\lambda}_t, t\geq0\}$ is tight, where $\overline{\lambda}_t$ is defined as
\begin{equation}\label{lambda}
    \overline{\lambda}_t=\frac{1}{t}\int_0^t(\mathbf{B}'_s\lambda)(\Gamma)ds=\frac{1}{t}\int_0^t\int_{X}P_s(u,\Gamma)\lambda(du)ds,
\end{equation}
with $\Gamma\in B(X)$ is a Borel set and $\lambda$ is a probability measure on $\left( X,B(X) \right)$, $X$ Polish space. 
Since the Coriolis force does not contribute to the energy estimate \eqref{equality}, we prove the existence of a stationary measure as in \cite{kuk}. We include the proof here for the readers' convenience.
\begin{theorem}\label{theoremstationarymeasure}
Under the hypotheses of Theorem \ref{TheoSolution}, the stochastic Navier-Stokes system \eqref{Problemleray} with an arbitrary $\nu>0$ and $\alpha>0$ has a stationary measure.
\end{theorem}

\begin{proof}
Let us denote by $u(t,x)$ the solution of \eqref{Problemleray} issued from $u_0$ and by $\lambda_t$ the law of $u(t)$  as a random variable in $H$. To this end, it is enough to show that the family $\{\overline{\lambda}_t,t\geq0\}$ is tight, where $\overline{\lambda}_t$ is defined by relation \eqref{lambda}. Since the embedding $V\subset H$ is compact (Sobolev embedding Theorem), it is sufficient to prove that
\begin{equation}\label{limit}
    \sup_{t\geq0}\overline{\lambda}_t\left(H \backslash B_{V}(R)\right)\to 0,\ \text{ as }R\to \infty,
\end{equation}
where $B_{V}(R)$ stands for the ball in $V$ of radius $R$ centered in $0$. The relation \eqref{equality} with $u_0=0$ implies that
\begin{equation}\label{L2boundu}
    \alpha\mathbb{E}\int_0^t\norm{u}^2_{L^2}ds\leq \varepsilon t.
\end{equation}
Similarly we have 
\begin{equation}\label{L2boundgradu}
    \nu\mathbb{E}\int_0^t\norm{\nabla u}^2_{L^2}ds\leq \varepsilon t.
\end{equation}
By \eqref{L2boundu} and \eqref{L2boundgradu} we derive 
\begin{equation}\label{H1bound}
    \mathbb{E}\int_0^t\norm{u}^2_{H^1}ds\leq C\varepsilon t.
\end{equation}
Combining \eqref{H1bound} with Chebyshev's inequality, we arrive at
\begin{align*}
    \overline{\lambda}_t(H\backslash B_{V}(R))&=\frac{1}{t}\int_0^t \lambda_s(H\backslash B_{V}(R))ds=\frac{1}{t}\int_0^t\mathbb{P}\left\{\norm{u(s)}_{H^1}>R\right\}ds\\
    &\leq\frac{1}{t}R^{-2}\mathbb{E}\int_0^t \norm{u(s)}^2_{H^1}ds\leq C R^{-2}\varepsilon,
\end{align*}
whence follows convergence \eqref{limit}.

\end{proof}

\subsection{Regularity of stationary measure's support}\label{sec: regulaty solution}
\begin{theorem}\label{regularity solution thm}
In addition to the hypotheses of Theorem \ref{theoremstationarymeasure}, assume that 
\begin{equation}
    \varepsilon_1=\sum_{j=1}^\infty \alpha_jb_j^2<\infty
\end{equation}
where $\{\alpha_j\}$ are the eigenvalues of the Laplace operator $- \Delta u$
in $\mathbb{T}\cross I$. Let $\mu$, be a stationary measure of \eqref{Problem1}. Denotes the expectation w.r.t $\mu$ by $\mathbb{E}$. Then, we have: \begin{equation}\label{laplacianbound}
    \nu\mathbb{E}\norm{\Delta u}^2_{L^2}\leq\frac{\varepsilon_1}{2}.
\end{equation}
\end{theorem}

\begin{proof}
Assuming sufficiently regular $u$, we use the It\^{o} formula to the functional $\Phi(u)=-\langle \Delta u,u\rangle=\norm{\nabla u}^2_{L^2}$ (cf. \cite[Proposition 2.4.12]{kuk}). 

We observe,
\begin{equation*}
    \partial_t\Phi(u;v)=0, \ \ \
    \partial_u\Phi(u;v)=-2\langle\Delta u,v\rangle, \ \ \ \partial_u^2\Phi(u;v)=-2\langle \Delta v,v\rangle.
\end{equation*}
Then, we have 
\begin{align}\label{eqinter2}
    \norm{\nabla u(t\wedge\tau_n)}^2_{L^2}=\norm{\nabla u_0}^2_{L^2}&-2\int_0^{t\wedge\tau_n} \langle \Delta u,\nu \Delta u-\alpha u-(u\cdot \nabla)u-fu^\perp-\nabla p\rangle+\frac{\varepsilon_1}{2}ds \notag\\
    &-2\sum_{j=1}^\infty b_j\underbrace{\int_0^{t\wedge\tau_n} \langle \Delta u,e_j\rangle d\beta_j(s)}_{=:M(t\wedge\tau_n)},
\end{align}
where we set 
\begin{equation*}
    \tau_n=\inf\{t\geq0;\ \norm{\nabla u(t)}_{L^2}>n\}.
\end{equation*}
Using several integrations by parts and incompressibility, we get
\begin{align}
    \langle \Delta u, (u\cdot \nabla)u\rangle &=\langle \left(\partial_{11}u^1+\partial_{22}u^1,\partial_{11}u^2+\partial_{22}u^2\right),\left(u^1\partial_1u+u^2\partial_2u\right)\rangle=0,\\
    \langle \nabla p, \Delta u\rangle&=\sum_{j=1}^2\int_{\mathbb{T}\cross I}\Delta u^j\partial_j p dx=-\sum_{j=1}^2\int_{\mathbb{T}\cross I}p\Delta\partial_j u^j dx=0,\\
    \langle \Delta u, fu^\perp \rangle&=\int_{\mathbb{T}\cross I}-\left(\partial_{11} u^1+\partial_{22}u^1\right)\left(f_0+\beta x^2\right)u^2+\left(\partial_{11} u^2+\partial_{22}u^2\right)\left(f_0+\beta x^2\right)u^1dx\notag\\
    &=\beta\int_{\mathbb{T}\cross I}-\left(\partial_{11} u^1+\partial_{22}u^1\right) x^2u^2+\left(\partial_{11} u^2+\partial_{22}u^2\right) x^2u^1dx\notag\\
    &=\beta\int_{\mathbb{T}\cross I}u^2\partial_2 u^1-u^1\partial_2u^2 dx\notag\\    &=\beta\int_{\mathbb{T}\cross I}u^1\partial_1 u^1+\beta\int_{\mathbb{T}\cross I}u^2\partial_2u^2 dx=0.
\end{align}
Taking the mean value in \eqref{eqinter2} we obtain
\begin{align}
     &\mathbb{E}\norm{\nabla u(t\wedge\tau_n)}^2_{L^2}+2\alpha\mathbb{E}\int_0^{t\wedge\tau_n} \norm{\nabla u}^2_{L^2}\ ds+2\nu\mathbb{E}\int_0^{t\wedge\tau_n} \norm{\Delta u}^2_{L^2}\ ds\notag\\
     =&\mathbb{E}\norm{\nabla u_0}^2_{L^2}+\varepsilon_1\mathbb{E}(t\wedge\tau_n)
\end{align}
As seen in the proof of Proposition \ref{propenergybalance}, we can pass to the limit as $n\to\infty$ and we get
\begin{align}
     &\mathbb{E}\norm{\nabla u(t)}^2_{L^2}+2\alpha\mathbb{E}\int_0^{t} \norm{\nabla u}^2_{L^2}\ ds+2\nu\mathbb{E}\int_0^{t} \norm{\Delta u}^2_{L^2}\ ds
     =\mathbb{E}\norm{\nabla u_0}^2_{L^2}+\varepsilon_1t.
\end{align}
Moreover, if $u$ is a stationary solution, we have 
\begin{align}
     \alpha\mathbb{E} \norm{\nabla u}^2_{L^2}+\nu\mathbb{E} \norm{\Delta u}^2_{L^2}=\frac{\varepsilon_1}{2},
\end{align}
from which \eqref{laplacianbound}.
\end{proof}

\bibliographystyle{sn-basic}
\bibliography{sn-bibliography}

\end{document}